\documentclass{amsart}
\usepackage{amsmath, amssymb, amsthm}
\usepackage{amscd}
\usepackage{graphicx} 
\usepackage{enumerate}


\title{On the heights of totally $p$-adic numbers}
  \author[Fili]{Paul Fili}
 \address{Department of Mathematics\\ University of Rochester, Rochester, NY 14627}
 \email{fili@math.rochester.edu}
 \subjclass[2010]{11G50, 11R06, 37P30}
 \keywords{Weil height, totally $p$-adic, potential theory, Fekete-Szeg\H{o} theorem.}
\date{\today}

\usepackage[pdftitle={On the heights of totally $p$-adic numbers},pdfauthor={Fili},pdfstartview={}]{hyperref}

\usepackage[all]{xy}

\newtheorem{thm}{Theorem}
\newtheorem{conj}{Conjecture}

\newtheorem{cor}{Corollary}

\newtheorem*{thm*}{Theorem}
\newtheorem*{alg*}{Algorithm}
\newtheorem*{lemma*}{Lemma}

\theoremstyle{remark}

\newtheorem*{rmk*}{Remark}

\newtheorem*{notation*}{Notation}

\theoremstyle{definition}
\newtheorem{defn}{Definition}
\newtheorem*{defn*}{Definition}


\newcommand{\mybf}{\mathbb}

\newcommand{\bP}{\mybf{P}}
\newcommand{\bR}{\mybf{R}}

\newcommand{\bC}{\mybf{C}}

\newcommand{\bQ}{\mybf{Q}}
\newcommand{\bF}{\mybf{F}}

\newcommand{\cD}{\mathcal{D}}

\newcommand{\cX}{\mathcal{X}}
\newcommand{\al}{\alpha}

\providecommand{\abs}[1]{\lvert#1\rvert}
\providecommand{\norm}[1]{\lVert#1\rVert}

\newcommand{\ra}{\rightarrow}

\newcommand{\ep}{\epsilon}



\newcommand{\Qbar}{\overline{\mybf{Q}}}
\newcommand{\Kbar}{\overline{K}}

\def\talltareesidedbox#1{\setbox0=\hbox{$#1$}\dimen0=\wd0 \advance\dimen0 by3pt\rlap{\hbox{\vrule height10pt width.4pt
 depth2pt \kern-.4pt\vrule height10.4pt width\dimen0 depth-10pt\kern-.4pt \vrule height10pt width.4pt depth2pt}}
 \relax \hbox to\dimen0{\hss$#1$\hss}}
\def\tareesidedbox#1{\setbox0=\hbox{$#1$}\dimen0=\wd0 \advance\dimen0 by3pt\rlap{\hbox{\vrule height8pt width.4pt
 depth2pt \kern-.4pt\vrule height8.4pt width\dimen0 depth-8pt\kern-.4pt \vrule height8pt width.4pt depth2pt}}
\relax \hbox to\dimen0{\hss$#1$\hss}}
\def\shorttareesidedbox#1{\setbox0=\hbox{$#1$}\dimen0=\wd0 \advance\dimen0 by3pt\rlap{\hbox{\vrule height7pt width.4pt
 depth2pt \kern-.4pt\vrule height7.4pt width\dimen0 depth-7pt\kern-.4pt \vrule height7pt width.4pt depth2pt}}
 \relax \hbox to\dimen0{\hss$#1$\hss}}

\newcommand{\sP}{\mathsf{P}}
\newcommand{\sA}{\mathsf{A}}

\begin{document}

\begin{abstract}
 Bombieri and Zannier established lower and upper bounds for the limit infimum of the Weil height in fields of totally $p$-adic numbers and generalizations thereof. In this paper, we use potential theoretic techniques to generalize the upper bounds from their paper and, under the assumption of integrality, to improve slightly upon their bounds.
\end{abstract}

\maketitle

\section{Statement of Results}
Recall that an algebraic number is said to be \emph{totally $p$-adic} if its image lies in $\bQ_p$ for any embedding $\Qbar\hookrightarrow \bC_p$, where $\bC_p$ denotes the completion of an algebraic closure of $\bQ_p$. This is analogous to the usual definition of a totally real number, however, unlike $\bC/\bR$, the extension $\bC_p/\bQ_p$ is of infinite degree, so in fact we can make an even broader generalization:
\begin{defn}
Let $L_p/\bQ_p$ be a (finite) Galois extension for $p\leq \infty$ a rational prime. We say $\al\in\Qbar$ is \emph{totally $L_p$} if all Galois conjugates of $\al$ lie in $L_p\subset \bC_p$.
\end{defn}
More generally, when our objects are defined over an arbitrary number field $K$, we make the following definition:
\begin{defn}
Fix a base number field $K$, and let $S$ be a set of places of $K$. For each $v\in S$, we choose a Galois extension $L_v/K_v$. We say that $\al$ is \emph{totally $L_S/K$} if, for each $v\in S$, all of the $K$-Galois conjugates of $\al$ lie in $L_v$.
\end{defn}

\noindent Notice that $\al$ is totally $L_S/K$ if and only if the minimal polynomial for $\al$ over $K$ splits in $L_v$ for each $v\in S$. In our terminology, a number being totally real is equivalent to being totally $\bR$ (or totally $\bR/\bQ$), and being totally $p$-adic is equivalent to being totally $\bQ_p$ (totally $\bQ_p/\bQ$). Notice that the set of all totally $L_S/K$ numbers form a normal extension of $K$ (typically of infinite degree). 

Bombieri and Zannier \cite{BombieriZannierNote} studied the question of what the limit infimum of the Weil height was in these fields when the base field was assumed to be $\bQ$, and they proved the following:
\begin{thm*}[Bombieri and Zannier 2001]
Let $L/\bQ$ be a normal extension (possibly of infinite degree) and $S$ is the set of finite rational primes such that $L_p/\bQ_p$ is Galois (in particular, of finite degree), then
\[
 \liminf_{\al\in L} h(\al) \geq \frac{1}{2} \sum_{p\in S} \frac{\log p}{e_p(p^{f_p} + 1)}
\]
where $e_p$ and $f_p$ denote the ramification and inertial degrees of $L_p/\bQ_p$, respectively.

Further, in the case where $S=\{p_1,\ldots,p_n\}$ is a finite set of nonarchimedean rational primes with $L_{p_i}=\bQ_{p_i}$ for each $i$, if we let $L$ be the field of all totally $L_S/\bQ$ numbers, then we have
\[
 \liminf_{\al\in L} h(\al) \leq \sum_{i=1}^n \frac{\log p_i}{p_i - 1}.
\]
\end{thm*}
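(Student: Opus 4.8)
I would prove this by a potential-theoretic, Fekete--Szeg\H{o}-type construction. Fix $\epsilon>0$ and set $R_\epsilon:=\epsilon+\prod_{i=1}^n p_i^{1/(p_i-1)}$; note $R_\epsilon>1$ and $\log\prod_i p_i^{1/(p_i-1)}=\sum_i\frac{\log p_i}{p_i-1}$, so it suffices to exhibit infinitely many non-torsion $\al\in L$ with $h(\al)\le\log R_\epsilon$ and then let $\epsilon\to0$. First I would assemble an adelic compact set over $\bQ$: at the archimedean place take $E_\infty=\{z\in\bC:|z|\le R_\epsilon\}$; at each $p_i\in S$ take $E_{p_i}=\bZ_{p_i}$ (viewed inside $\bC_{p_i}$); and at every other finite place $v$ take $E_v=\{x\in\bC_v:|x|_v\le1\}$, the closed unit ball. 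Any algebraic number $\al$ whose full set of $\bQ$-conjugates lies in $E_v$ for every $v$ is then (i) an algebraic integer, being integral at every finite place; (ii) totally $L_S/\bQ$, since at each $p_i$ its conjugates lie in $\bZ_{p_i}\subset\bQ_{p_i}=L_{p_i}$, hence $\al\in L$; and (iii) of bounded height, since all non-archimedean local contributions to $h(\al)$ vanish (integrality, including at the $p_i$) while at the archimedean place every conjugate has absolute value $\le R_\epsilon$, so $h(\al)=\frac{1}{[\bQ(\al):\bQ]}\sum_{\sigma}\log^+|\sigma\al|\le\log^+R_\epsilon=\log R_\epsilon$.

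To produce infinitely many such $\al$ of unbounded degree, I would invoke the generalized Fekete--Szeg\H{o} theorem with local conditions (Fekete--Szeg\H{o}; Cantor; Rumely; in the adelic/Berkovich form used in the body of the paper): if the adelic set $\prod_v E_v$ has capacity $>1$ --- since the base field is $\bQ$ this is the condition $\prod_v\mathrm{cap}_v(E_v)>1$ with $E_\infty$ stable under complex conjugation --- then there exist infinitely many algebraic numbers over $\bQ$, of unbounded degree, all of whose conjugates lie in $E_v$ for every $v$ simultaneously. With the normalization for which the closed unit ball has capacity $1$ at each finite place and $\mathrm{cap}_\infty(\{|z|\le r\})=r$, the relevant local capacities are $\mathrm{cap}_\infty(E_\infty)=R_\epsilon$, $\mathrm{cap}_v(E_v)=1$ for every finite $v\notin S$, and --- the one genuine computation --- $\mathrm{cap}_p(\bZ_p)=p^{-1/(p-1)}$. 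Granting this, $\prod_v\mathrm{cap}_v(E_v)=R_\epsilon\prod_i p_i^{-1/(p_i-1)}=1+\epsilon\prod_i p_i^{-1/(p_i-1)}>1$, so the hypothesis holds; and since only finitely many roots of unity can have all their $p_i$-adic conjugates inside $\bZ_{p_i}$, all but finitely many of the resulting $\al$ are non-torsion. Letting $\epsilon\to0$ then yields $\liminf_{\al\in L}h(\al)\le\sum_i\frac{\log p_i}{p_i-1}$.

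For the capacity $\mathrm{cap}_p(\bZ_p)$: since $\bZ_p$ and the kernel $-\log|x-y|_p$ are invariant under translation by $\bZ_p$, the equilibrium measure is normalized Haar measure $\mu$ on $\bZ_p$, and splitting a Haar-random pair $x,y$ according to whether they lie in the same residue class mod $p$ (probability $1/p$, in which case $x-y=p(x'-y')$ with $x',y'$ again Haar) or in distinct classes (in which case $|x-y|_p=1$) gives the self-similarity relation $I_p:=\iint\log|x-y|_p\,d\mu\,d\mu=\frac1p\bigl(-\log p+I_p\bigr)$, whence $I_p=-\frac{\log p}{p-1}$ and $\mathrm{cap}_p(\bZ_p)=e^{I_p}=p^{-1/(p-1)}$. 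This identity is the heart of the matter: it quantifies exactly how much ``archimedean room'' the totally-$p$-adic-and-integral constraint at $p$ consumes, and it is what makes the constant come out as $\sum_i\frac{\log p_i}{p_i-1}$, already realized by the round disk $E_\infty$ (whose equilibrium measure, uniform on $|z|=R_\epsilon$, minimizes $\int\log^+|z|\,d\mu$ among sets of that capacity). I expect the main obstacle to be not any of these computations but citing the Fekete--Szeg\H{o}/adelic capacity machinery in precisely the form required --- producing algebraic numbers over the base field all of whose conjugates simultaneously satisfy finitely many prescribed local membership conditions while remaining integral everywhere --- which is standard but substantial and is presumably developed in detail earlier in the paper.
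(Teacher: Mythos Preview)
Your proof of the upper bound is correct and follows essentially the same route as the paper's proof of Theorem~\ref{thm:p-adic-upper-bound} (which specializes to this statement): an adelic Fekete--Szeg\H{o} construction with $E_{p_i}=\bZ_{p_i}$, unit discs at the remaining finite places, and an archimedean disc whose radius is chosen so that the global capacity just exceeds $1$. The only differences are cosmetic---you perturb the archimedean radius additively rather than by a factor $e^{\epsilon}$, and you compute $\mathrm{cap}_p(\bZ_p)=p^{-1/(p-1)}$ directly via the energy integral instead of citing Rumely; the lower bound is not addressed in your proposal, but neither is it proved in the paper, where it is simply quoted from Bombieri and Zannier.
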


Smyth \cite{SmythTotRealI,SmythTotRealII} and Flammang \cite{Flammang} proved analogous results in the totally real case for the limit infimum of the height, however, they imposed the additional hypothesis that one consider only totally real integers.

This note has two aims: first, to prove a slightly sharper lower bound by adding the hypothesis that we consider only integers, and second, to generalize the upper bound from Bombieri and Zannier's paper by the aid of the Fekete-Szeg\H{o} theorem with splitting conditions, as formulated and proven by Rumely Rumely \cite{RumelyFeketeI,RumelyFeketeII}. Our results are the following:

\begin{thm}\label{thm:totpadic}
Fix a number field $K$, a set of nonarchimedean places $S$ of $K$, and a choice of Galois extension $L_v/K_v$ for each $v\in S$. Let $L$ be the field of all totally $L_S/K$ numbers, and $O_{L}$ denote its ring of integers. Then
 \[
 \liminf_{\al\in O_L} h(\al) \geq \frac{1}{2} \sum_{v\in S} N_v\cdot \frac{\log p_v}{e_v(q_v^{f_v} - 1)}
\]
where $N_v=[K_v:\bQ_v]/[K:\bQ]$, $e_v$ and $f_v$ denote the ramification and inertial degrees of $L_v/K_v$, respectively, and $q_v$ denotes the order of the residue field of $K_v$, and $p_v$ is rational prime above which $v$ lies.
\end{thm}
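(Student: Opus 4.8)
The plan is to play the archimedean and $S$-adic parts of the discriminant of $\alpha$ off against each other through the product formula over $K$. First I would reduce to large degree: if the asserted inequality failed there would be infinitely many distinct $\alpha_n\in O_L$ with $h(\alpha_n)$ bounded, and since by Northcott's theorem only finitely many algebraic numbers have both bounded degree and bounded height, we may pass to a subsequence along which $d_n:=[K(\alpha_n):K]\to\infty$. Let $f_n\in O_K[x]$ be the monic minimal polynomial of $\alpha_n$ over $K$, with roots $\alpha_n^{(1)},\dots,\alpha_n^{(d_n)}$, and set $D_n=\operatorname{disc}(f_n)=\prod_{i<j}(\alpha_n^{(i)}-\alpha_n^{(j)})^2\in O_K\setminus\{0\}$. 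Since $D_n$ is an algebraic integer, $|D_n|_v\le 1$ at every finite place $v$ of $K$, so the product formula gives
\[
 \sum_{v\mid\infty} n_v\log|D_n|_v \;=\; \sum_{v\nmid\infty} n_v\log\frac{1}{|D_n|_v} \;\ge\; \sum_{v\in S} n_v\log\frac{1}{|D_n|_v},
\]
where $n_v=[K_v:\bQ_v]$.

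For $v\in S$ I would bound the corresponding term from below by a capacity (pigeonhole) estimate, and this is where integrality is indispensable: the hypotheses force \emph{every} conjugate $\alpha_n^{(i)}$ into the \emph{compact} ring $O_{L_v}$, whose reduction modulo $\mathfrak{m}_{L_v}^k$ has $(q_v^{f_v})^k$ residue classes. Distributing the $d_n$ conjugates among these classes, convexity shows that for each $k$ the number of ordered pairs $(i,j)$ with $i\ne j$ and $\alpha_n^{(i)}\equiv\alpha_n^{(j)}\pmod{\mathfrak{m}_{L_v}^k}$ is at least $d_n^2(q_v^{f_v})^{-k}-d_n$; summing over $k$ and using $\sum_{k\ge1}(q_v^{f_v})^{-k}=(q_v^{f_v}-1)^{-1}$ yields
\[
 \log\frac{1}{|D_n|_v} \;=\; \sum_{i\ne j}\log\frac{1}{|\alpha_n^{(i)}-\alpha_n^{(j)}|_v} \;\ge\; d_n^2\cdot\frac{\log p_v}{e_v(q_v^{f_v}-1)} - O(d_n\log d_n),
\]
the constant being exactly the Robin constant (equilibrium energy) of $O_{L_v}$. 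For the archimedean side I would use Mahler's inequality $|D_n|_v\le d_n^{\,d_n}\,M_v(f_n)^{2d_n-2}$ at each $v\mid\infty$ — equivalently, Hadamard's inequality for the Vandermonde matrix of the $\alpha_n^{(i)}$ — where $M_v(f_n)=\prod_i\max(1,|\alpha_n^{(i)}|_v)$; summing against $n_v$ and using $\sum_{v\mid\infty}n_v\log M_v(f_n)=[K(\alpha_n):\bQ]\,h(\alpha_n)=[K:\bQ]\,d_n\,h(\alpha_n)$, valid because $\alpha_n$ is integral, gives
\[
 \sum_{v\mid\infty} n_v\log|D_n|_v \;\le\; (2d_n-2)[K:\bQ]\,d_n\,h(\alpha_n) + [K:\bQ]\,d_n\log d_n.
\]

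Chaining the displays above and dividing by $(2d_n-2)[K:\bQ]d_n$ gives a lower bound for $h(\alpha_n)$ in which the $O(d_n\log d_n)$ and $\log d_n$ contributions are $o(1)$ while $d_n^2/\big((2d_n-2)[K:\bQ]d_n\big)\to 1/(2[K:\bQ])$; letting $d_n\to\infty$ therefore yields
\[
 \liminf_{\alpha\in O_L}h(\alpha) \;\ge\; \frac{1}{2[K:\bQ]}\sum_{v\in S} n_v\cdot\frac{\log p_v}{e_v(q_v^{f_v}-1)} \;=\; \frac12\sum_{v\in S} N_v\cdot\frac{\log p_v}{e_v(q_v^{f_v}-1)},
\]
since $N_v=n_v/[K:\bQ]$. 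I expect the $S$-adic capacity estimate to be the crux: one must know the conjugates genuinely lie in the compact set $O_{L_v}$, not merely in the non-compact field $L_v$, so that its transfinite diameter controls how far apart they can spread — enlarging $O_{L_v}$ is precisely what degrades our $q_v^{f_v}-1$ to Bombieri--Zannier's $q_v^{f_v}+1$. The remaining care is bookkeeping: fixing normalizations of the local absolute values and of $h$ so that the Robin constant of $O_{L_v}$ is exactly $\tfrac{\log p_v}{e_v(q_v^{f_v}-1)}$, and — this is why Mahler's inequality rather than the wasteful $|\alpha^{(i)}-\alpha^{(j)}|_v\le|\alpha^{(i)}|_v+|\alpha^{(j)}|_v$ is needed — so that the archimedean bound carries no spurious additive constant.
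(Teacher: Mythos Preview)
Your proposal is correct and follows essentially the same strategy as the paper: Northcott to pass to large degree, the product formula applied to the discriminant, the capacity of $O_{L_v}$ for the $S$-adic lower bound, and Mahler's inequality for the archimedean upper bound. The only cosmetic difference is that you derive the $S$-adic bound by an explicit pigeonhole count modulo powers of $\mathfrak{m}_{L_v}$ (which is exactly how one computes the transfinite diameter of $O_{L_v}$), whereas the paper simply quotes the value $\log\gamma_{\infty,v}(O_{L_v})=-N_v\log p_v/(e_v(q_v^{f_v}-1))$ from Rumely's book and uses Baker's reformulation of Mahler in place of the classical form.
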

\noindent As Bombieri and Zannier remark in their note, if the above sum diverges (which may happen if we allow $S$ to be infinite), then it follows that the set of totally $L_S/K$ integers satisfies the Northcott property (there are only a finite number with height below any fixed constant).

If we restrict our attention to extensions normal over $\bQ$, as Bombieri and Zannier do, our result reduces to:
\begin{cor}
Fix a set of finite rational primes $S$ and a choice of Galois extension $L_p/\bQ_p$ for each $p\in S$. Let $L$ be the field of all totally $L_S$ numbers, and $O_{L}$ denote its ring of integers. Then
 \[
 \liminf_{\al\in O_L} h(\al) \geq \frac{1}{2} \sum_{p\in S} \frac{\log p}{e_p(p^{f_p} - 1)}
\]
where $e_p$ and $f_p$ denote the ramification and inertial degrees of $L_p/\bQ_p$, respectively.
\end{cor}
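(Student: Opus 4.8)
The plan is to run a Bombieri--Zannier style discriminant argument over the base field $K$, exploiting the integrality hypothesis to control the nonarchimedean pairwise distances between conjugates through the transfinite diameter of $O_{L_v}$. Fix $\al\in O_L$ with minimal polynomial $f\in O_K[x]$ over $K$ of degree $n$ and conjugates $\al=\al_1,\dots,\al_n$, and consider $\operatorname{disc}(f)=\pm\prod_{i\neq j}(\al_i-\al_j)$, a nonzero element of $O_K$. Since its norm to $\bQ$ is a nonzero rational integer, the product formula over $K$ gives $\prod_v|\operatorname{disc}(f)|_v^{\,n_v}=1$ (with $n_v=[K_v:\bQ_v]$ and each $|\cdot|_v$ normalized to extend the standard absolute value on $\bQ_v$); discarding the nonpositive logarithmic contributions of the finite places outside $S$ yields
\[
\sum_{v\mid\infty}n_v\log|\operatorname{disc}(f)|_v\ \geq\ \sum_{v\in S}n_v\Bigl(-\sum_{i\neq j}\log|\al_i-\al_j|_v\Bigr).
\]
Everything reduces to bounding the left side above by a constant times $n\log n$ plus a constant times $n^2h(\al)$, and each term on the right below by a constant times $n^2$.

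For the archimedean side I would apply Mahler's inequality $\prod_{i\neq j}|\al_i-\al_j|_v\leq n^n\prod_i\max(1,|\al_i|_v)^{2n-2}$ at each infinite place; because $\al$ is an algebraic integer its height receives no nonarchimedean contribution, so $\sum_{v\mid\infty}n_v\sum_i\log\max(1,|\al_i|_v)=n[K:\bQ]\,h(\al)$, and the left side of the displayed inequality is at most $[K:\bQ]\,n\log n+2(n-1)n[K:\bQ]\,h(\al)$. For the nonarchimedean side, integrality forces all $n$ conjugates into $O_{L_v}$ for each $v\in S$; sorting them by their images in the successive quotients $O_{L_v}/\mathfrak{m}_{L_v}^{k}$, each of cardinality $(q_v^{f_v})^{k}$, and minimizing the number of coincident pairs level by level (the minimum being realized, asymptotically, by an equidistributed configuration) one finds $\sum_{i\neq j}\bigl(-\log|\al_i-\al_j|_v\bigr)\geq \frac{n^2}{q_v^{f_v}-1}\cdot(-\log|\pi_{L_v}|_v)+O(n\log n)$, where $\pi_{L_v}$ is a uniformizer. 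This is precisely the computation of the transfinite diameter of $O_{L_v}$, and it is here that the index $e_v$ and the normalizing factor $N_v=n_v/[K:\bQ]$ enter, through $-\log|\pi_{L_v}|_v$ and the weight $n_v$. Assembling the two estimates, dividing by $2(n-1)n[K:\bQ]$, and letting $n\to\infty$ gives the asserted bound for every family of conjugates of growing degree; the finitely many $\al$ of any fixed bounded degree are dispatched by Northcott's theorem, so a sequence in $O_L$ whose heights tend below the bound must have unbounded degree, a contradiction.

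The main obstacle is the nonarchimedean step: extracting the sharp constant $q_v^{f_v}-1$ (rather than a crude bound such as $q_v^{2f_v}$) requires carrying out the level-by-level optimization carefully and checking that the simultaneously minimizing equidistributed configuration is admissible in the limit, and it requires tracking the normalization of $|\cdot|_v$ so that the answer comes out as the stated combination of $\log p_v$, $e_v$, $q_v^{f_v}-1$ and $N_v$. The potential-theoretic viewpoint streamlines this, as the quantity one needs is exactly minus the logarithm of the transfinite diameter (Robin constant) of $O_{L_v}$; and the appearance of $q_v^{f_v}-1$---rather than the $q_v^{f_v}+1$ of Bombieri and Zannier---reflects counting residue classes in $\bA^1$ rather than $\bP^1$, which is precisely what the integrality hypothesis provides.
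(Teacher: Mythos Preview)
Your argument is correct and is essentially the paper's proof of Theorem~\ref{thm:totpadic} (from which the corollary follows by taking $K=\bQ$, so $N_v=1$ and $q_v=p$): product formula applied to the discriminant, Mahler's inequality at the archimedean places, and the transfinite diameter of $O_{L_v}$ at the places in $S$. The only cosmetic difference is that the paper quotes the value $-\log\gamma_{\infty,v}(O_{L_v})=N_v\log p_v/(e_v(q_v^{f_v}-1))$ from Rumely's book, whereas you sketch the underlying pigeonhole/level-by-level computation directly.
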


\begin{thm}\label{thm:p-adic-upper-bound}
 Fix a number field $K$, a finite set of nonarchimedean places $S$ of $K$, and a choice of Galois extension $L_v/K_v$ for each $v\in S$ with ramification degree $e_v$, inertial degree $f_v$, and residue field degree $q_v$ and characteristic $p_v$. Let $L$ be the field of all totally $L_S/K$ numbers. Then
\[
 \liminf_{\al \in L} h(\al) \leq \sum_{v\in S} N_v\cdot \frac{\log p_v}{e_v(q_v^{f_v}-1)}.
\]
 where $N_v=[K_v:\bQ_v]/[K:\bQ]$.
\end{thm}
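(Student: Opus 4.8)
The plan is to construct, for each $\varepsilon>0$, infinitely many totally $L_S/K$ numbers of Weil height at most $\sum_{v\in S}N_v\frac{\log p_v}{e_v(q_v^{f_v}-1)}+\varepsilon$, by producing them as the algebraic points supplied by Rumely's Fekete--Szeg\H{o} theorem with splitting (local rationality) conditions on $\bP^1$ over $K$ \cite{RumelyFeketeI,RumelyFeketeII}. Because Northcott's theorem allows only finitely many algebraic numbers of bounded degree and bounded height, the degrees of these numbers must be unbounded, so they witness $\liminf_{\al\in L}h(\al)\le \sum_{v\in S}N_v\frac{\log p_v}{e_v(q_v^{f_v}-1)}+\varepsilon$, and letting $\varepsilon\to0$ finishes the proof.

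First I would fix the adelic set $\mathbb E=\prod_v E_v$, Galois-stable at each place, as follows. At every archimedean place $v$ of $K$ take $E_v$ to be a closed disc $\{z:|z|_v\le R_v\}$ with $R_v>1$ to be chosen. At every finite place $v\notin S$ take $E_v$ to be the full closed unit ball of $\bC_v$; it has capacity $1$, and requiring the conjugates to lie in it forces the numbers we produce to be algebraic integers outside $S$. At each $v\in S$ impose Rumely's local rationality condition that $v$ split in $K(\al)/K$ compatibly with $L_v/K_v$ --- i.e.\ that every completion of $K(\al)$ at a place above $v$ embed into $L_v$ --- and take the local set $E_v=\cO_{L_v}$, the ring of integers of $L_v$, which is a $\Gal(L_v/K_v)$-stable compact subset of the unit ball of $\bC_v$. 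Any $\al$ produced then has all $K$-conjugates in $\cO_{L_v}\subseteq L_v$ for every $v\in S$, so $\al$ is totally $L_S/K$; moreover all its conjugates have absolute value $\le1$ at every finite place, so the finite places contribute nothing to $h(\al)$ and $h(\al)\le\sum_{v\mid\infty}N_v\log R_v$. (In particular these $\al$ are algebraic integers, so the construction simultaneously bounds $\liminf_{\al\in O_L}h(\al)$ and, together with Theorem~\ref{thm:totpadic}, sandwiches it.)

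Next I would invoke Rumely's existence theorem: if the global capacity of $\mathbb E$, computed relative to $\infty$ and incorporating the local rationality conditions at the places of $S$, exceeds $1$, then there are infinitely many $\al\in\Kbar$ whose conjugates lie in $\mathbb E$ and which satisfy all the splitting conditions. That global capacity condition unwinds to an inequality of the form $\sum_{v\mid\infty}N_v\log R_v > \sum_{v\in S}c_v$, where $c_v$ is the (negative of the) contribution of the $L_v$-rational set $\cO_{L_v}$ at $v$. Evaluating $c_v$ is the one real computation: the transfinite diameter of $\cO_{L_v}$, found by placing Fekete points along the tree of residue discs of $\cO_{L_v}$, is
\[
 \operatorname{cap}(\cO_{L_v})=p_v^{-1/\bigl(e(L_v/\bQ_{p_v})(q_v^{f_v}-1)\bigr)}
\]
in the absolute value normalized by $|p_v|_v=p_v^{-1}$; combining $\log\operatorname{cap}(\cO_{L_v})$ with the $N_v$ weight and the normalization factor that Rumely's global capacity attaches to an $L_v$-rationality condition at $v$ yields $c_v=N_v\frac{\log p_v}{e_v(q_v^{f_v}-1)}$. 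Choosing the $R_v$ so that $\sum_{v\mid\infty}N_v\log R_v$ exceeds $\sum_{v\in S}N_v\frac{\log p_v}{e_v(q_v^{f_v}-1)}$ by less than $\varepsilon$ then produces the desired numbers.

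The hard part is the interface with Rumely's theorem. I would need to check that $\mathbb E$ together with the chosen local rationality data meets the hypotheses of the Fekete--Szeg\H{o} theorem with splitting conditions (Galois-stability and compactness/capacitability of each $E_v$, the reference-point and absolute-value normalizations over $K$), to see that ``totally $L_S/K$'' is translated faithfully into Rumely's conditions, and above all to pin down exactly how an $L_v$-rationality condition enters the $v$-adic capacity --- this is the step that converts the bare $\operatorname{cap}(\cO_{L_v})$ into the constant $N_v\frac{\log p_v}{e_v(q_v^{f_v}-1)}$ and must be done carefully to land on the stated bound rather than a different one. By comparison the archimedean bookkeeping (discs, for which supremum norm and capacity agree), the integrality of the resulting numbers, and the vanishing of their finite-place contributions are routine.
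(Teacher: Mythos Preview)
Your proposal is correct and follows essentially the same approach as the paper: apply Rumely's Fekete--Szeg\H{o} theorem with splitting conditions to the adelic set consisting of $\cO_{L_v}$ at each $v\in S$, the unit disc at the remaining finite places, and archimedean discs whose radii are chosen so that the global capacity just exceeds $1$; the key input in both is the formula $\log\gamma_{\infty,v}(\cO_{L_v})=-N_v\frac{\log p_v}{e_v(q_v^{f_v}-1)}$, and letting $\varepsilon\to0$ yields the bound. The only cosmetic difference is that the paper fixes the archimedean radius to make the global capacity exactly $1$ and then passes to an open $\varepsilon$-neighborhood, whereas you choose the radii $R_v$ directly with an $\varepsilon$ of slack.
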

\noindent Notice that this differs from our lower bound only by the factor of $1/2$.

One interesting thing to note in Theorem \ref{thm:totpadic} is that the shape of our bound comes directly from the formula for the logarithmic capacity of the ring of integers of $L_v$ at each place. This connection to potential theory gives an interesting explanation for the shape of the observed bound, and suggests that by determining minimal energy measures on $L_v$ one might further improve the bounds above. In fact, this connection inspires us to conjecture that in fact the upper bound in Theorem \ref{thm:p-adic-upper-bound} is sharp:

\begin{conj}
 Fix a number field $K$, a finite set of nonarchimedean places $S$ of $K$, and a choice of Galois extension $L_v/K_v$ for each $v\in S$ with ramification degree $e_v$, inertial degree $f_v$, and residue field of degree $q_v$ and characteristic $p_v$. Let $L$ be the field of all totally $L_S/K$ numbers and $O_L$ its ring of integers. Then
\[
 \liminf_{\al \in O_L} h(\al) = \sum_{v\in S} N_v\cdot \frac{\log p_v}{e_v(q_v^{f_v}-1)}.
\]
 where $N_v=[K_v:\bQ_v]/[K:\bQ]$. 
\end{conj}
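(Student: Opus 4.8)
We outline a strategy toward the conjecture. The upper bound should be attainable with the tools already at hand, and the matching lower bound is where the real difficulty lies. \emph{The upper bound.} To prove $\liminf_{\al\in O_L}h(\al)\le\sum_{v\in S}N_v\log p_v/(e_v(q_v^{f_v}-1))$ one strengthens the construction behind Theorem~\ref{thm:p-adic-upper-bound} so that the numbers it produces are algebraic \emph{integers}: apply Rumely's Fekete--Szeg\H{o} theorem with splitting conditions to the adelic set whose component at each $v\in S$ is the ring of integers $O_{L_v}$ equipped with the $\Gal(L_v/K_v)$-splitting condition, whose component at every other nonarchimedean place of $K$ is the unit ball $O_{K_v}$, and whose component at each archimedean place $v$ is a disk of radius $r_v$ about the origin. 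The splitting conditions force the conjugates of the $\al$ produced to lie in $L_v$ for every $v\in S$, and the unit-ball conditions at the remaining finite places force these $\al$ to be globally integral, so that $\al\in O_L$; the hypothesis of Rumely's theorem is positivity of the associated global capacity. In that capacity the unit-ball components contribute nothing, while each component $O_{L_v}$, with its splitting condition, contributes the quantity $N_v\log\operatorname{cap}_v(O_{L_v})=-N_v\log p_v/(e_v(q_v^{f_v}-1))$---this is precisely the capacity computation that we remarked above is the source of the shape of the bound in Theorem~\ref{thm:totpadic}. Hence Rumely's theorem applies the moment $\sum_{v\mid\infty}N_v\log r_v$ exceeds $\sum_{v\in S}N_v\log p_v/(e_v(q_v^{f_v}-1))$, and then it produces infinitely many $\al\in O_L$ with $h(\al)\le\sum_{v\mid\infty}N_v\log r_v$; letting the $r_v$ decrease toward their critical values gives the desired inequality. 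The one point requiring care is that his theorem can genuinely be run with the unit-ball conditions in force at the auxiliary finite places without disturbing the capacity count, but this is exactly the integrality set-up in his work and should be routine.

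\emph{The lower bound.} The hard half is to improve Theorem~\ref{thm:totpadic} by deleting its factor of $\tfrac12$, and that factor is intrinsic to the method of proof. One applies the product formula to $\operatorname{disc}(f_\al)\in O_K\setminus\{0\}$, estimates $\binom{d}{2}^{-1}\sum_{i<j}\log|\al_i-\al_j|_v\le\log\operatorname{cap}_v(O_{L_v})+o(1)$ at each $v\in S$ via the transfinite-diameter bound for $d$ points of $O_{L_v}$, and at the archimedean places uses Mahler's discriminant inequality $|\operatorname{disc}(f_\al)|\le d^{d}M(\al)^{2d-2}$, which controls $\sum_{i<j}\log|\al_i-\al_j|_v$ for $v\mid\infty$. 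Comparing the two sides, the $\binom{d}{2}\sim d^2/2$ pairs on the nonarchimedean side are pitted against the exponent $2d-2\sim 2d$ of $M(\al)$ on the archimedean side, and this mismatch by a factor of $2$ is exactly the $\tfrac12$ in the theorem. The lost factor cannot be recovered by improving the archimedean estimate---Mahler's bound is essentially sharp---so the only plan in sight is to replace the discriminant by products of resultants $\prod_k\Res(f_\al,g_k)^{n_k}$ against auxiliary polynomials $g_k\in O_K[x]$ chosen adapted to the sets $O_{L_v}$, balancing the archimedean size of the values $g_k(\al_i)$ against their $v$-adic smallness. This is the auxiliary-polynomial method by which Smyth and Flammang attack the totally real Schur--Siegel--Smyth trace problem.

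\emph{The main obstacle.} The auxiliary-polynomial method will certainly push the constant above $\tfrac12$, but, exactly as in the trace problem, we should expect it to stop short of $\sum_{v\in S}N_v\log p_v/(e_v(q_v^{f_v}-1))$: there is every indication of a genuine gap between what resultants can prove and the equilibrium-energy value. Closing that gap---equivalently, showing that the adelic equilibrium energy itself governs the liminf---would seem to demand an equidistribution theorem for \emph{height-minimizing} (as opposed to height-vanishing) sequences of totally $L_S/K$ integers, forcing their conjugate-counting measures to converge place-by-place to the adelic equilibrium measure, and no such result is currently available. We therefore expect the exact lower bound to be both the essential new content of the conjecture and by far its most difficult part.
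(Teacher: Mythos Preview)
The statement you are addressing is a \emph{conjecture} in the paper, not a theorem; the paper offers no proof of it, so there is no ``paper's own proof'' to compare against. Your write-up is accordingly not a proof but an honest outline of what is known and what is missing, and on that level it is accurate.

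For the upper bound, your argument is essentially the construction the paper already uses for Theorem~\ref{thm:p-adic-upper-bound}: the adelic set there has $E_v=\cD_v(0,1)$ at every finite $v\notin S$, so the Fekete--Szeg\H{o} output is automatically integral---the paper itself notes parenthetically that the sequence produced consists ``in fact, [of] integers.'' So the inequality $\liminf_{\al\in O_L}h(\al)\le\sum_{v\in S}N_v\log p_v/(e_v(q_v^{f_v}-1))$ is already established in the paper, and your sketch recovers the same argument.

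For the lower bound, you correctly diagnose where the factor $\tfrac12$ in Theorem~\ref{thm:totpadic} comes from (the mismatch between $\binom{d}{2}$ pairs and the exponent $2d-2$ in Mahler's inequality), and you are candid that the auxiliary-polynomial method of Smyth--Flammang is not expected to close the gap and that an equidistribution statement of the needed strength is unavailable. That is a fair assessment of the state of affairs, but it means your proposal does not prove the conjecture---nor does the paper. The genuine gap is exactly the one you name: removing the $\tfrac12$ in the lower bound remains open.
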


It is still an interesting open question whether the limit infimum for all numbers can be in fact achieved with integers or not. In this direction, we will note only that the result of Bombieri and Zannier, when read as an equidistribution result, seems to indicate that totally $p$-adic points of low height should be distributing evenly in the residue classes of $\bP^1(\bF_p)$. This seems to suggest that the limit infimum over an entire field $L$ of the type constructed above might be smaller than that obtained by integers; however, all of the smallest known limit points, both for the height of totally real numbers (see \cite{SmythTotRealI}) and the height of totally $p$-adic numbers (as in Theorem \ref{thm:p-adic-upper-bound}) are in fact achieved by sequences of integers.

\section{Proofs}
\begin{proof}[Proof of Theorem \ref{thm:totpadic}]
We now proceed to prove Theorem \ref{thm:totpadic}. Recall that we have fixed a base number field $K$, and let $M_K$ denotes the places of $K$. First, let us note that if $S$ is infinite, we can take a limit over increasing finite subsets of $S$, and the general result will follow, so we may as well assume that $S$ is finite in our proof. For convenience let 
\[
 N_v = \frac{[K_v:\bQ_v]}{[K:\bQ]}.
\]
We recall Baker's reformulation of Mahler's inequality from \cite{BakerAverages}, namely, if $\al$ is an algebraic number and $\al_1,\ldots,\al_n$ denote its $K$-Galois conjugates, then
\begin{equation}\label{eqn:baker-mahler}
 \frac{1}{n(n-1)} \sum_{i\neq j} g_v(x,y) \geq \begin{cases}\displaystyle
                                                    -N_v \cdot \frac{\log n}{n-1} & \text{if }v\mid \infty,\\
                                                    0 & \text{otherwise}
                                                   \end{cases}
\end{equation}
where
\[
 g_p(x,y) = \log^+\abs{x}_v + \log^+\abs{y}_v - \log \abs{x-y}_v,
\]
and our absolute values are normalized so that $\abs{\cdot}_v = \norm{\cdot}_v^{N_v}$ where $\norm{\cdot}_v$ extends the usual absolute of $\bQ$ over which it lies (thus the absolute values $\abs{\cdot}_v$ satisfy the product formula, and $h(\al)= \sum_v \log^+\abs{\al}_v$). 
We assume in our theorem that $\al$ is integral. The key role is played by the discriminant here. At the place $v\in M_K$ ($p$ here being the rational prime over which the place $v$ lies), all of the conjugates of $\al$ lie in ring of integers $O_{L_v}$. By \cite[Example 4.1.24]{RumelyBook}, we have that the $v$-adic logarithmic capacity of $O_{L_v}$ with respect to the point $\infty$, which we will denote $\gamma_{\infty,v}(O_{L_v})$, satisfies
\[
 \log \gamma_{\infty,v}(O_{L_v}) = -N_v\cdot \frac{\log p_v}{e_v(q_v^{f_v}-1)} < 0,
\]
where $q_v$ denotes the order of the residue field of $O_{L_v}$ and $p_v$ its characteristic.\footnote{In the text \cite{RumelyBook} the $v$-adic absolute value $\abs{\cdot}_v$ which is used to compute the capacity is normalized to agree with the modulus with respect to the additive Haar measure, so in the text a $\log q_v$ appears in the numerator instead of $\log p_v$. We also wish to draw the reader's attention to the fact that the ring of integers $O_{L_v}$ is `strictly smaller' than the closed unit disc in $\sP^1(\bC_v)$, which has logarithmic capacity $\log \gamma_{\infty,v}(\cD_v(0,1))=0$.} We can also view this as the transfinite diameter of $O_{L_v}$; more specifically, if we adopt the notation of \cite[Ch. 6]{BakerRumelyBook} and let
\[
 \log d_n(O_{L_v})_\infty = \sup_{z_1,\ldots,z_n\in O_{L_v}} \frac{1}{n(n-1)} \log \abs{z_i-z_j}_v,
\]
then by \cite[Lemma 6.21 and Theorem 6.23]{BakerRumelyBook} it follows that\footnote{Note that here for convenience our absolute values are normalized so that we have the same normalization factors as in computing the absolute logarithmic Weil height.}
\[
 \lim_{n\ra\infty} \log d_n(O_{L_v})_\infty = \log \gamma_{\infty,v}(O_{L_v}).
\]

Let $\{\al^{(k)}\}_{k=1}^\infty$ denote a sequence of algebraic numbers such that
\[
 \lim_{k\ra\infty} h(\al^{(k)}) = \liminf_{\al\in O_L} h(\al),
\]
and let $n_k$ denote the number of $K$-Galois conjugates of $\al^{(k)}$, which we denote $\al^{(k)}_1,\ldots, \al^{(k)}_{n_k}$. Since the $\al^{(k)}$ have bounded height, it follows from Northcott's theorem that $n_k\ra\infty$ as $k\ra\infty$. By definition of $d_n(O_{L_v})_\infty $, we have
\[
 \frac{1}{n_k(n_k-1)}\sum_{i\neq j} \log \abs{\al^{(k)}_i-\al^{(k)}_j}_v\leq \log d_{n_k}(O_{L_v})_\infty 
\]
for any totally $L_S/K$ integer and $v\in S$, so it follows that 
\[
 \limsup_{k\ra\infty} \sum_{v\in S} \frac{1}{n_k(n_k-1)}\sum_{i\neq j} \log \abs{\al^{(k)}_i-\al^{(k)}_j}_v\leq \sum_{v\in S} \log \gamma_{\infty,v}(O_{L_v}).
\]
Therefore, by the product formula, using the assumption of integrality,
\[
 \liminf_{k\ra\infty} \sum_{\substack{w\in M_K\\ w\mid \infty}}
 \frac{1}{n_k(n_k-1)}\sum_{i\neq j} \log \abs{\al^{(k)}_i-\al^{(k)}_j}_w
 \geq -\sum_{v\in S} \log \gamma_{\infty,v}(O_{L_v}).
\]
It now follows from \eqref{eqn:baker-mahler} that
\begin{multline*}
 \liminf_{k\ra\infty} 2h(\al^{(k)})\geq \liminf_{k\ra\infty} \sum_{w\mid\infty} \log^+\abs{\al^{(k)}}_w\\
 \geq -\sum_{v\in S} \log \gamma_{\infty,v}(O_{L_v}) = \sum_{v\in S} N_v\cdot \frac{\log p_v}{e_v(q_v^{f_v}-1)}.
\end{multline*}
Upon dividing each side by $2$, the result follows.
\end{proof}
\begin{proof}[Proof of Theorem \ref{thm:p-adic-upper-bound}]
 We will prove the result by applying the adelic Fekete-Szeg\H{o} theorem with splitting conditions \cite[Theorem 2.1]{RumelyFeketeII} (see also \cite{RumelyFeketeI} as well as \cite[Theorem 6.27]{BakerRumelyBook} for an interpretation in terms of the Berkovich topology). For each $v\in S$, let $O_{L_v}\subset L_v$ denote the ring of integers in $L_v$. Notice that $O_{L_v}\subset \sP^1(\bC_p)$ is a compact set and let $\cD_v(0,1)\subset \sP^1(\bC_v)$ be the usual Berkovich unit disc. Let $\mathbb{E}\subset \prod_{v\in M_K} \sA^1(\bC_v)$ be the adelic Berkovich set given by
\[
 \mathbb{E} = \prod_{v\in M_K} E_v\quad\text{where}\quad E_v =
\begin{cases}
 O_{L_v}& \text{for }v\in S\\
 \cD_v(0,1) &\text{for }v\not\in S,\\
 \cD\bigg(0,\displaystyle \prod_{v\in S} q_v^{N_v/e_v(q_v^{f_v}-1)}\bigg) & \text{for }v\mid \infty.
\end{cases}
\]
By \cite[Example 4.1.24]{RumelyBook} we have normalized $v$-adic logarithmic capacity
\[
 \log \gamma_{\infty,v}(O_{L_v}) = -N_v\cdot \frac{\log p_v}{e_v(q_v^{f_v}-1)}\quad\text{for each}\quad v\in S,
\]
where $\gamma_\infty$ denotes the capacity relative to $\cX = \{\infty\}$ in the notation of \cite{RumelyFeketeII}. At the infinite places, we have
\[
 \sum_{w\mid\infty} \log \gamma_{\infty,w}(\cD(0,\prod_{v\in S}q_v^{N_v/e_v(q_v^{f_v}-1)})) = \sum_{v\in S} N_v\cdot \frac{\log p_v}{ e_p(p^{f_p}-1)},
\]
 so that the adelic capacity, that is, the product of all of the normalized capacities, is
\[
 \gamma_\infty(\mathbb{E}) = \prod_{v\in M_K} \gamma_{\infty,v}(E_v) = 1.
\]
Further, by our assumptions, $\mathbb{E}$ is stable under the action of the continuous Galois automorphisms of $\bC_v/K_v$ at each place. For $\ep>0$, we consider the Berkovich adelic neighborhood
\[
 \mathbb{U} = \prod_{w\in M_K} U_w \quad\text{where}\quad U_w =
\begin{cases}
 E_w &\text{for }w\nmid \infty \\
 \cD\bigg(0,\displaystyle e^{\ep} \prod_{v\in S} q_v^{N_v/e_v(q_v^{f_v}-1)}\bigg)^{-} & \text{for }w\mid\infty,
\end{cases}
\]
where $\cD(a,r)^{-}$ denotes the usual open disc centered at $a$ of radius $r$. Then we can apply the adelic Fekete-Szeg\H{o} theorem with splitting conditions as formulated in \cite[Theorem 2.1]{RumelyFeketeII}, there are infinitely many algebraic numbers in $\Kbar$ all of whose conjugates lie in $\mathbb{U}$ with the additional condition that all $\bC_v/K_v$ conjugates lie in $U_v= E_v=O_{L_v}$ for each $v\in S$. Clearly, the only contribution to the height of such numbers comes from the archimedean places, and as a result, we have
\[
 h(\al) \leq \sum_{v\in S} N_v\cdot \frac{\log p_v}{e_v(q_v^{f_v}-1)} + \ep.
\]
Thus by letting $\ep \ra 0$ we can generate a sequence of numbers (in fact, integers) with the limit infinimum of the height bounded by the desired constant.
\end{proof}

\bibliographystyle{abbrv} 
\bibliography{bib}        

\begin{thebibliography}{1}

\bibitem{BakerAverages}
M.~Baker.
\newblock A lower bound for average values of dynamical {G}reen's functions.
\newblock {\em Math. Res. Lett.}, 13(2-3):245--257, 2006.

\bibitem{BakerRumelyBook}
M.~Baker and R.~Rumely.
\newblock {\em Potential theory and dynamics on the {B}erkovich projective
  line}, volume 159 of {\em Mathematical Surveys and Monographs}.
\newblock American Mathematical Society, Providence, RI, 2010.

\bibitem{BombieriZannierNote}
E.~Bombieri and U.~Zannier.
\newblock A note on heights in certain infinite extensions of {$\mathbb{Q}$}.
\newblock {\em Atti Accad. Naz. Lincei Cl. Sci. Fis. Mat. Natur. Rend. Lincei
  (9) Mat. Appl.}, 12:5--14, 2001.

\bibitem{Flammang}
V.~Flammang.
\newblock Two new points in the spectrum of the absolute {M}ahler measure of
  totally positive algebraic integers.
\newblock {\em Math. Comp.}, 65(213):307--311, 1996.

\bibitem{RumelyFeketeI}
R.~Rumely.
\newblock The {F}ekete-{S}zeg{\H o} theorem with splitting conditions. {I}.
\newblock {\em Acta Arith.}, 93(2):99--116, 2000.

\bibitem{RumelyFeketeII}
R.~Rumely.
\newblock The {F}ekete-{S}zeg{\H o} theorem with splitting conditions. {II}.
\newblock {\em Acta Arith.}, 103(4):347--410, 2002.

\bibitem{RumelyBook}
R.~S. Rumely.
\newblock {\em Capacity theory on algebraic curves}, volume 1378 of {\em
  Lecture Notes in Mathematics}.
\newblock Springer-Verlag, Berlin, 1989.

\bibitem{SmythTotRealI}
C.~J. Smyth.
\newblock On the measure of totally real algebraic integers.
\newblock {\em J. Austral. Math. Soc. Ser. A}, 30(2):137--149, 1980/81.

\bibitem{SmythTotRealII}
C.~J. Smyth.
\newblock On the measure of totally real algebraic integers. {II}.
\newblock {\em Math. Comp.}, 37(155):205--208, 1981.

\end{thebibliography}

\end{document}